\newtheorem{theorem}{Theorem}[section]
\newtheorem{corollary}[theorem]{Corollary}
\theoremstyle{definition}
\theoremstyle{remark}
\theoremstyle{question}
\newtheorem*{theorem*}{Theorem}
\begin{document}

\title[Embedding normed linear spaces into $C(X)$]{Embedding normed linear spaces into $\mathbf{C(X)}$}

\author[M. Fakhar]{M. Fakhar}
\address{\textbf{M. Fakhar} Department of Mathematics, University of Isfahan, Isfahan 81745--163, Iran, and, School of Mathematics, Institute for Research in Fundamental Sciences (IPM), P.O. Box: 19395--5746, Tehran, Iran.}
\email{fakhar@math.ui.ac.ir}

\author[M.R. Koushesh]{M.R. Koushesh$^*$}
\address{\textbf{M.R. Koushesh} Department of Mathematical Sciences, Isfahan University of Technology, Isfahan 84156--83111, Iran, and, School of Mathematics, Institute for Research in Fundamental Sciences (IPM), P.O. Box: 19395--5746, Tehran, Iran.}
\email{koushesh@cc.iut.ac.ir}

\author[M. Raoofi]{M. Raoofi}
\address{\textbf{M. Raoofi} Department of Mathematical Sciences, Isfahan University of Technology, Isfahan 84156--83111, Iran, and, School of Mathematics, Institute for Research in Fundamental Sciences (IPM), P.O. Box: 19395--5746, Tehran, Iran.}
\email{raoofi@cc.iut.ac.ir}

\thanks{$^*$Corresponding author}


\subjclass[2010]{Primary 46A50; Secondary 54C35, 54D35, 46B20, 46B50, 46E15}

\keywords{Stone--\v{C}ech compactification, Banach--Alaoglu theorem, embedding theorem}

\begin{abstract}
It is well known that every (real or complex) normed linear space $L$ is isometrically embeddable into $C(X)$ for some compact Hausdorff space $X$. Here $X$ is the closed unit ball of $L^*$ (the set of all continuous scalar-valued linear mappings on $L$) endowed with the weak$^*$ topology, which is compact by the Banach--Alaoglu theorem. We prove that the compact Hausdorff space $X$ can indeed be chosen to be the Stone--\v{C}ech compactification of $L^*\setminus\{0\}$, where $L^*\setminus\{0\}$ is endowed with the supremum norm topology.
\end{abstract}

\maketitle

\section{Introduction}

Throughout this note by a \textit{space} we will mean a topological space, unless we explicitly state otherwise. blue The field of scalars (which is fixed throughout discussion) is either the real field $\mathbb{R}$ or the complex field $\mathbb{C}$, and is denoted by $\mathbb{F}$.

For a compact Hausdorff space $X$, we denote by $C(X)$ the set of all continuous scalar-valued mappings on $X$. The set $C(X)$ is a normed linear space when equipped with the supremum norm and pointwise addition and scalar multiplication.

It is known that every (real or complex) normed linear space $L$ can be isometrically embedded into $C(X)$ for some compact Hausdorff space $X$. Here $X$ is the closed unit ball of $L^*$ (the set of all continuous scalar-valued linear mappings on $L$) endowed with the weak$^*$ topology, which is known to be compact by the Banach--Alaoglu theorem. In this note we give a new proof of this well known fact, with $X$ being chosen as the Stone--\v{C}ech compactification of $L^*\setminus\{0\}$, where $L^*\setminus\{0\}$ is endowed with the supremum norm topology. Our proof is rather topological and makes use of some elementary properties of the Stone--\v{C}ech compactification. We conclude with a result which provides an upper bound for the density of $X$ in terms of the density of $L$.

Recall that a \textit{compactification} of a completely regular space $X$ is a compact Hausdorff space which contains $X$ as a dense subspace. The \textit{Stone--\v{C}ech compactification} of a completely regular space $X$, denoted by $\beta X$, is the (unique) compactification of $X$ which is characterized among all compactifications of $X$ by the fact that every continuous bounded mapping $f:X\rightarrow\mathbb{F}$ is extendable to a continuous mapping $F:\beta X\rightarrow\mathbb{F}$. The Stone--\v{C}ech compactification of a completely regular space always exists. For more information on the theory of the Stone--\v{C}ech compactification see \cite{E}, \cite{GJ}, or \cite{PW}.

The Stone--\v{C}ech compactification was introduced independently by M.H. Stone \cite{S} and E. \v{C}ech \cite{C} in 1937, developing an idea of A. Tychonoff \cite{T} (used in the proof of his celebrated result nowadays referred to as the \textit{Tychonoff theorem}). The Banach--Alaoglu theorem was proved by L. Alaoglu \cite{A} in 1940, as a consequence of the Tychonoff theorem; though, a proof of this theorem for separable normed linear spaces had been already published in 1932 by S. Banach \cite{B}. (For an interesting proof of the Banach--Alaoglu theorem assuming the existence of the Stone--\v{C}ech compactification see the recent paper \cite{G} by H. Hosseini Giv.)

\section{The embedding theorem}

Here we prove our embedding theorem. The proof uses only some basic facts from the theory of the Stone--\v{C}ech compactification besides an appeal to the Hahn--Banach theorem.

\begin{theorem}\label{KJJ}
Let $L$ be a normed linear space. Then $L$ can be isometrically embedded into $C(Y)$ for a compact Hausdorff space $Y$, namely, for
\[Y=\beta\big(L^*\setminus\{0\}\big),\]
where $L^*\setminus\{0\}$ is endowed with the supremum norm topology.
\end{theorem}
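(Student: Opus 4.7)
The plan is to associate to each $x\in X$ a bounded continuous real-valued function on $X^*\setminus\{0\}$ whose supremum norm equals $\|x\|$, and then transport the whole situation to $\beta(X^*\setminus\{0\})$ via the universal property of the Stone--\v{C}ech compactification.

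First I would define, for each $x\in X$, the evaluation-style function
\[\hat{x}:X^*\setminus\{0\}\to\mathbb{R},\qquad \hat{x}(\phi)=\frac{\phi(x)}{\|\phi\|}.\]
In the norm topology on $X^*$, both $\phi\mapsto\phi(x)$ (bounded linear, with norm $\le\|x\|$) and $\phi\mapsto\|\phi\|$ are continuous, and the denominator is nonvanishing on $X^*\setminus\{0\}$, so $\hat{x}$ is a continuous function bounded by $\|x\|$. The Hahn--Banach theorem, applied to the one-dimensional subspace spanned by $x$, produces $\phi_{0}\in X^{*}$ with $\|\phi_{0}\|=1$ and $\phi_{0}(x)=\|x\|$, which shows $\sup_{\phi}|\hat{x}(\phi)|=\|x\|$ whenever $x\neq 0$.

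Next I would invoke the defining property of $\beta(X^{*}\setminus\{0\})$ recalled in the introduction: every bounded continuous $\hat{x}$ extends uniquely to a continuous $\tilde{x}:\beta(X^{*}\setminus\{0\})\to\mathbb{R}$. Because $X^{*}\setminus\{0\}$ is dense in its Stone--\v{C}ech compactification and $\tilde{x}$ is continuous, the supremum of $\tilde{x}$ equals that of $\hat{x}$, so $\|\tilde{x}\|_{\infty}=\|x\|$. I would finish by checking that $x\mapsto\tilde{x}$ is linear: linearity of $x\mapsto\hat{x}$ is immediate from linearity of each $\phi$ (the factor $1/\|\phi\|$ is independent of $x$), and the uniqueness of continuous extensions to $\beta(X^{*}\setminus\{0\})$ promotes this to linearity of $x\mapsto\tilde{x}$. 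Together with the norm equality, this gives the required isometric embedding $X\hookrightarrow C(Y)$ with $Y=\beta(X^{*}\setminus\{0\})$.

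The step most likely to require care is not any single estimate, but rather the justification that the norm topology (rather than the weak$^{*}$ topology) is the correct choice on $X^{*}\setminus\{0\}$: this is exactly what makes $\phi\mapsto\|\phi\|$ continuous so that $\hat{x}$ is well-defined and continuous as a quotient, and it is also the reason the origin must be excised. Once $\hat{x}$ is in hand, the rest of the argument is essentially formal, with the Stone--\v{C}ech universal property supplying both the extension to a compact Hausdorff space and the preservation of the supremum norm.
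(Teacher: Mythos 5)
Your proposal is correct and follows essentially the same route as the paper: the same function $\phi\mapsto\phi(x)/\|\phi\|$, the same Hahn--Banach step to attain $\|x\|$, extension via the Stone--\v{C}ech property, norm preservation via density of $X^*\setminus\{0\}$ in its compactification, and linearity promoted from the dense subspace by uniqueness of continuous extensions (the paper phrases this as two continuous maps agreeing on a dense subset).
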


\begin{proof}
Let $x\in L$. Define
\[\theta_x:L^*\setminus\{0\}\longrightarrow\mathbb{F}\]
by
\[\theta_x(x^*)=\frac{x^*(x)}{\|x^*\|}.\]
It is clear that the mapping $\theta_x$ is continuous, when $L^*\setminus\{0\}$ is endowed with the supremum norm topology. We verify that $\theta_x$ is bounded. For this purpose we indeed show that
\begin{equation}\label{LGD}
\|\theta_x\|=\|x\|.
\end{equation}
Note that
\[\big|\theta_x(x^*)\big|=\frac{|x^*(x)|}{\|x^*\|}\leq\frac{\|x^*\|\|x\|}{\|x^*\|}=\|x\|\]
for any $x^*\in L^*\setminus\{0\}$. Thus $\|\theta_x\|\leq\|x\|$. On the other hand, by the Hahn--Banach theorem, there exists some $z^*\in L^*$ such that $\|z^*\|=1$ and $|z^*(x)|=\|x\|$. Therefore
\[\big|\theta_x(z^*)\big|=\frac{|z^*(x)|}{\|z^*\|}=\|x\|,\]
which implies that $\|x\|\leq\|\theta_x\|$. This shows (\ref{LGD}). Observe that the mapping $\theta_x$, being continuous and bounded, can be extended to the continuous mapping
\[\Theta_x:\beta\big(L^*\setminus\{0\}\big)\longrightarrow\mathbb{F}.\]
Define
\[\Theta:L\longrightarrow C\big(\beta\big(L^*\setminus\{0\}\big)\big)\]
such that
\[x\longmapsto\Theta_x.\]
We show that $\Theta$ embeds $L$ isometrically into $C(\beta(L^*\setminus\{0\}))$, that is, $\Theta$ preserves addition, scalar multiplication and norm. Note that
\begin{equation}\label{JGHD}
\theta_{x+z}=\theta_x+\theta_z
\end{equation}
by definition. Thus
\[\Theta(x+z)=\Theta(x)+\Theta(z),\]
as $\Theta(x+z)$ and $\Theta(x)+\Theta(z)$ are continuous and agree on the dense subspace $L^*\setminus\{0\}$ of $\beta(L^*\setminus\{0\})$ by (\ref{JGHD}); indeed
\[\Theta_{x+z}|_{L^*\setminus\{0\}}=\theta_{x+z}\;\;\;\;\mbox{ and }\;\;\;\;(\Theta_x+\Theta_z)|_{L^*\setminus\{0\}}=\theta_x+\theta_z.\]
Similarly, we can show that
\[\Theta(\alpha x)=\alpha\Theta(x).\]
To conclude the proof we need to show that $\Theta$ preserves norm. It is clear that
\[\|\theta_x\|\leq\|\Theta_x\|,\]
as $\Theta_x$ extends $\theta_x$. Also
\[\|\Theta_x\|\leq\|\theta_x\|,\]
as
\begin{eqnarray*}
|\Theta_x|\big(\beta\big(L^*\setminus\{0\}\big)\big)&=&|\Theta_x|\big(\overline{L^*\setminus\{0\}}\big)\\&\subseteq&\overline{|\Theta_x|\big(L^*\setminus\{0\}\big)}=
\overline{|\theta_x|\big(L^*\setminus\{0\}\big)}\subseteq\big[0,\|\theta_x\|\big].
\end{eqnarray*}
That is
\[\|\Theta_x\|=\|\theta_x\|.\]
This, together with (\ref{LGD}), proves that
\[\big\|\Theta(x)\big\|=\|x\|.\]
\end{proof}

The following corollary should be known. We derive it here, however, as an immediate consequence of the construction given in Theorem \ref{KJJ}.

Recall that the \textit{density} of a space $X$, denoted by $\mathrm{d}(X)$, is the minimum cardinality of a dense subset of $X$; more precisely
\[\mathrm{d}(X)=\min\big\{|D|:D\mbox{ is dense in }X\big\}.\]
In particular, a space $X$ is separable if and only if $\mathrm{d}(X)\leq\aleph_0$. It is clear that the density of a space is always bounded by its cardinality.

\begin{corollary}\label{JHF}
A non-zero normed linear space $L$ can be isometrically embedded into $C(Y)$ for a compact Hausdorff space $Y$ of density at most $2^{\,\mathrm{d}(L)}$.
\end{corollary}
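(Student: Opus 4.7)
The plan is to take $Y$ to be exactly the compact Hausdorff space produced by Theorem \ref{KJJ}, namely $Y = \beta(X^*\setminus\{0\})$, and then bound its density by a cardinal-arithmetic argument. The embedding itself comes for free, so the entire task is to show that $\mathrm{d}(Y) \leq 2^{\mathrm{d}(X)}$.

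First I would use the fact that $X^*\setminus\{0\}$ sits densely inside its Stone--\v{C}ech compactification to conclude
\[\mathrm{d}(Y) = \mathrm{d}\bigl(\beta(X^*\setminus\{0\})\bigr) \leq \bigl|X^*\setminus\{0\}\bigr| \leq |X^*|.\]
Thus it suffices to verify that $|X^*| \leq 2^{\mathrm{d}(X)}$. Let $D$ be a dense subset of $X$ of cardinality $\mathrm{d}(X)$. Since $\mathbb{R}$ is Hausdorff and any two continuous functions on $X$ agreeing on a dense set are equal, the restriction map $x^*\mapsto x^*|_D$ is injective from $X^*$ into $\mathbb{R}^D$. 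Hence
\[|X^*| \leq \bigl|\mathbb{R}^D\bigr| = (2^{\aleph_0})^{\mathrm{d}(X)} = 2^{\aleph_0 \cdot \mathrm{d}(X)} = 2^{\mathrm{d}(X)},\]
the last equality using that $\mathrm{d}(X) \geq \aleph_0$ by definition.

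Combining these two bounds yields $\mathrm{d}(Y) \leq 2^{\mathrm{d}(X)}$, and the corollary follows from Theorem \ref{KJJ}. There is no real obstacle here; the only nuance worth being explicit about is that a continuous real-valued function on $X$ is determined by its restriction to any dense subset (justifying the injectivity of $x^*\mapsto x^*|_D$), and that the cardinal exponentiation $(2^{\aleph_0})^{\mathrm{d}(X)} = 2^{\mathrm{d}(X)}$ relies on $\mathrm{d}(X)$ being infinite, which is ensured by the $+\aleph_0$ in the definition of density recalled just above the corollary statement.
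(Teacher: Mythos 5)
Your proposal is correct and follows essentially the same route as the paper: bound $\mathrm{d}(Y)$ by $|X^*\setminus\{0\}|$ via the density of $X^*\setminus\{0\}$ in its Stone--\v{C}ech compactification, then bound $|X^*|$ by $2^{\,\mathrm{d}(X)}$ through the injective restriction map $x^*\mapsto x^*|_D$ into $\mathbb{R}^D$, and invoke Theorem \ref{KJJ}. The paper phrases the first step as $\mathrm{d}(\beta(X^*\setminus\{0\}))\leq\mathrm{d}(X^*\setminus\{0\})$ rather than passing directly to the cardinality, but this is only a cosmetic difference.
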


\begin{proof}
Observe that
\[\mathrm{d}\big(\beta\big(L^*\setminus\{0\}\big)\big)\leq\mathrm{d}\big(L^*\setminus\{0\}\big),\]
as any dense subset of $L^*\setminus\{0\}$ is also dense in $\beta(L^*\setminus\{0\})$, since $L^*\setminus\{0\}$ is dense in $\beta(L^*\setminus\{0\})$. Let $D$ be a dense subset of $L$ of minimum cardinality. Note that $D$ is infinite, since $X$ is so, since $X$ is a non-zero linear space. Note that
\[|L^*|=\big|\{x^*|_D:x^*\in L^*\}\big|,\]
as any continuous scalar-valued mapping on $L$ is determined by its value on the dense subset $D$ of $L$. We have
\[\mathrm{d}\big(L^*\setminus\{0\}\big)\leq\big|L^*\setminus\{0\}\big|\leq|L^*|\leq|\mathbb{R}^D|=|\mathbb{R}|^{|D|}=2^{\,|D|}=2^{\,\mathrm{d}(L)}.\]
Theorem \ref{KJJ} now concludes the proof.
\end{proof}

\section*{\bf Acknowledgements}

The first two authors are supported in part by grants from IPM (No. 93550414 and No. 93030418).

The authors would like to thank the referee for reading the manuscript. The authors also thank an editor for comments.

\end{document}